\definecolor {gray}{cmyk}{1,1,1,1}
\newtheorem{theorem}{Theorem}[section]    
 \newtheorem{corollary}[theorem]{Corollary}
\theoremstyle{definition}
\newtheorem{definition}[theorem]{Definition}     
\newtheorem{example}[theorem]{Example}
\newtheorem{remark}[theorem]{Remark}
\definecolor{grey}{rgb}{.7,.7,.7}
\title[A discrete districting plan]{A discrete districting plan}
\author[A. Saracco]{Alberto Saracco}
\address{Alberto Saracco\\ Dipartimento di Scienze Matematiche, Fisiche e Informatiche\\ Universit\`a di Parma\\ Parco Area delle Scienze 53/A\\ I-43124 Parma\\ Italy}
\email{alberto.saracco@unipr.it}
\author[G. Saracco]{Giorgio Saracco}
\address{Giorgio Saracco\\ Dipartimento di Matematica\\ Universit\`a degli Studi di Pavia\\ via Ferrata 5\\ I-27100 Pavia\\ Italy}
\email{giorgio.saracco@unipv.it}
\thanks{A.~Saracco was partially supported by INdAM-GNSAGA. G.~Saracco was partially supported by the INdAM-GNAMPA 2019 project ``Problemi isoperimetrici in spazi Euclidei e non''.}
\subjclass[2010]{Primary: 91D20. Secondary: 49Q10, 52C99}  %%%90C35 // 05C90 // altro?
\keywords{isoperimetric problem, graphs, networks, discrete geometry, gerrymandering}
\begin{document}

\begin{abstract}
The outcome of elections is strongly dependent on the districting choices, making thus possible (and frequent) the gerrymandering phenomenon, i.e.\ politicians suitably changing the shape of electoral districts in order to win the forthcoming elections. While so far the problem has been treated using continuous analysis tools, it has been recently pointed out that a more reality-adherent model would use the discrete geometry of graphs or networks. Here we propose a parameter-dependent discrete model for choosing an ``optimal'' districting plan. We analyze several properties of the model and lay foundations for further analysis on the subject.
\end{abstract}

 \hspace{-3cm}
 {
 \begin{minipage}[t]{0.6\linewidth}
 \begin{scriptsize}
 \vspace{-3cm}
 This is a pre-print of an article published in \emph{Netw. Heterog. Media}. The final authenticated version is available online at: \href{http://dx.doi.org/10.3934/nhm.2019031}{doi:10.3934/nhm.2019031}
 \end{scriptsize}
\end{minipage} 
}

\maketitle

\section{Introduction}

The most ancient reference to the isoperimetric problem is nowadays known by the name of Dido's problem which has a political background. Legend has it that queen Dido of Carthage was given the chance to found a city on the area she would have been able to enclose with a given ox hide: she cut the hide in thin strips and proceeded to enclose a very large area. If one were to seize this opportunity and get the most out of it, what would be the best shape? In such a context, by best we mean \textit{with the greatest area} given a length. Nowadays this problem is more commonly known through its dual formulation, i.e. to find the least perimeter enclosing a given area. It has been hypothesized for millennia that the best is given by the circle, yet a formal proof was not available until very recently. A first step toward the proof was made by Steiner in the 19th century who showed that \textit{if a solution existed, it had to be the circle}~\cite{Ste38}. It was in the 20th century that De Giorgi bridged the last gaps proving the full result via the theory of sets of finite perimeter and BV functions he developed in~\cite{DeG54}.\par

Whenever one has to minimize the (some notion of) perimeter with a constraint on the (some notion of)  area, one refers to the problem as an isoperimetric-like.
These problems have a wide number of practical applications ranging from physics such as the capillarity phenomenon~\cite{LS18b, LeoSar18} to crystallography such as the shapes of equilibrium crystals~\cite{Cer06}. In the latest years isoperimetric-like problems were looked at with interest from social sciences with the explicit aim to recognize gerrymandering phenomena in politics~\cite{DucTen18, GMPRSbook}.\par

Most of the literature on gerrymandering discusses the current shapes of electoral districts and assigns to each different scores, all of isoperimetric nature whose goal is to measure the ``compactness'' of the shapes, whatever this may mean. The aim is twofold: compare districts via these scores and rank them from the most gerrymandered to the least, see for instance~\cite{BS18}; decide whether a district is gerrymandered or not, see for instance~\cite{BVGHSKLMR17,HKLVgBRM18, HRM17, MVG14}.\par

In this paper we propose a discrete districting plan and discuss some properties we would like our model to possess. The underlying idea is that given some region $\Omega$ we want to partition it in $N$ subregions $\{\Omega_k\}_{k=1}^N$ with the same population, i.e.
\begin{equation}\label{eq:ContConstraint}
\int_{\Omega_k} f(x)\, dx = \frac 1N \|f\|_{L^1(\Omega)}
\end{equation}
where $f$ denotes the population density over $\Omega$. A general approach to get the ``most compact'' shape is to minimize the (relative) perimeter of the partition, thus we would be led to minimize the functional
\[
\frac 12 \sum_{k=1}^N P(\Omega_k; \Omega)\,,
\]
subject to~\ref{eq:ContConstraint}, which is linked to the score proposed in~\cite{FLSS18}. Though, one might argue that it would be best to weigh the perimeter as well via the density population in order to make uneconomical to split highly populated regions. Indeed a high density might be an indicator of strong political, ethnical, religious, linguistic and so on ties, thus it would be fair to not split them. Then, the functional to be minimized would be
\begin{equation*}\label{eq:ContPer1}
\frac 12 \sum_{k=1}^N \int_{\Omega \cap \partial^* \Omega_k} f(x) \, d\,\mathcal{H}^{n-1}(x)\,,
\end{equation*}
where $\partial^* \Omega_k$ denotes the reduced boundary. More general functionals to be minimized can be proposed such as
\begin{equation}\label{eq:ContPer2}
\frac 12 \sum_{k=1}^N \int_{\Omega \cap \partial^* \Omega_k} g(x, \nu_{\Omega_k}(x)) \, d\,\mathcal{H}^{n-1}(x)\,,
\end{equation}
where the weight $g=g(x,\nu)$ takes into account not only the point but as well the direction of the boundary at the point. This weight $g$ might represent how big the flow of people and the exchange of informations at point $x$ in direction $\nu$ is. Isoperimetric-like problems of this fashion have been treated in~\cite{PraSar18, Sar18}, as well as the regularity of minimizers in~\cite{PraSar19}.\par

Continuous descriptions of the gerrymandering phenomenon suffer though from many issues as highlighted in~\cite{DucTen18} where the authors propose to make use of tools from discrete geometry and the theory of graphs to depict the scenario: some models taking into account a graph structure have been studied for instance in~\cite{ABLRS09} (see also~\cite{RSS13}). Indeed, the situation may be very well described in the setting of discrete geometry as the total population is finite. The idea is to choose one of the levels of the census' units (for instance US: block, block's group, tract; Italy: comune, provincia, regione) and assign to each a vertex $v_i$ in a graph $\Gamma$. We shall then say that two vertexes, $v_i, v_j$, are adjacent if the corresponding units share (a positive amount of) boundary and denote their edge by $e_{i,j}$. This latter hypothesis ensures that the resulting graph is planar. Then, the weight $f$ denotes the population of a vertex, while the weight $g$ is a measure of how well two adjacent vertexes are connected. Hence, the discrete functional  equivalent to~\ref{eq:ContPer2} is
\begin{equation}\label{eq:DiscPer}
 \sum_{\mathcal{C}(\{\Gamma_k\})} g(e)\,,
\end{equation}
where $\{\Gamma_k\}$ is a $N$-partition of $\Gamma$ and the sum is taken over the edges $e$ belonging to the \emph{cut set} $\mathcal{C}(\{\Gamma_k\})$, i.e.\ sloppily speaking the edges removed from $\Gamma$ to obtain the partition $\{\Gamma_k\}$ (see Definition~\ref{def:partition} for the formal details of partition and cut set). Clearly one can not impose a constraint analogous to~\ref{eq:ContConstraint} as in general this would prevent a solution to exist. Adding a penalization term of the form
\begin{equation}\label{eq:DiscConstr}
\sqrt{\sum_{k=1}^N \left(\sum_{v \in \Gamma_k} f(v) - \frac{\sum_{v \in \Gamma} f(v)}{N} \right)^2}\,,
\end{equation}
rather than a mass constraint, ensures the existence of solutions without completely dropping the request to have (almost) equally populated regions. This term represents the standard deviation of the populations in each subgraph with respect to the mean population of the whole graph, and one could use a general $p$-norm in place of the $2$-norm. The functional we will look at is the convex combination of~\ref{eq:DiscPer} and~\ref{eq:DiscConstr}, i.e.
\[
\lambda \sum_{\mathcal{C}(\{\Gamma_k\})} g(e) + (1-\lambda)\sqrt{\sum_{k=1}^N \left(\sum_{v \in \Gamma_k} f(v) - \frac{\sum_{v \in \Gamma} f(v)}{N} \right)^2}\,.
\]
Depending on the choice of $\lambda$ one gives more or less prominence to one of the two terms. When one looks purely at the perimeter energy~\ref{eq:DiscPer}, i.e. for $\lambda=1$, the problem is closely related with the one known as minimum-$N$-cut, where one seeks to split the graph in \emph{at least} $N$ components rather than \emph{exactly} $N$. \par

The proposed functional clearly does not only have applications to politics. One can imagine many different scenarios: for instance the vertexes of the graph might represent computers with the weight $f$ their powers, while edges represent direct connections and the weight $g$ how many MB/s of data-flow these links grant. Then our problem would represent the need to break this computer network in smaller groups of similar power by cutting the slowest connections. In this latter case though it is very possible that the starting graph is not planar, possibly adding more algorithmic complexity. For the sake of completeness, we recall that for fixed $N$ the minimum-$N$-cut problem is polynomial time solvable (see~\cite{BMSSS16, GH94} and the references therein).\par

The paper is organized as follows. In Section~\ref{sec:model} we lay the notation and precisely define the functional. In Section~\ref{sec:properties} we discuss a series of properties one would like to have for such a problem. For each desired property we provide either a proof of the property or show a counterexample disproving it. In Section~\ref{sec:p-norms} we briefly discuss what happens if we consider a more general deviation energy term. In Section~\ref{sec:conclusions} we discuss the open problems we have not settled yet, which would be nice to explore, and define some further research directions we shall investigate in the future.

\section{The model}\label{sec:model}
Let us consider a graph
\[
\Gamma=\big( V(\Gamma) ; E(\Gamma) \big)\,.
\]
The elements $\{v_i\}_i = V(\Gamma)$ represent the vertexes of the graph, while those of $E(\Gamma)$ the edges, i.e. the connections $e^k_{i,j}$ from vertex $v_i$ to vertex $v_j$ available in the graph. We shall suppose that
\begin{itemize}
\item $\Gamma$ is finite, i.e. $|V(\Gamma)| < +\infty$;
\item $\Gamma$ is simple, i.e. there is at most one edge $e_{i,j}$ connecting the vertex $v_i$ to $v_j$ and there are no loops $e_{i,i}$;
\item $\Gamma$ is undirected, i.e. $e_{i,j}$ is identified with $e_{j,i}$;
\item $\Gamma$ is connected, i.e. given any two vertexes $v, w$ there exists a sequence of vertexes $\{v_k\}_{k=1}^n$ such that $v_1 = v$, $v_n = w$ and $e_{k, k+1}\in E(\Gamma)$ for all $k=1,\dots, n-1$.
\end{itemize}
We endow the graph $\Gamma$ with two different weights, one acting on the vertexes and one on the edges. Specifically, we set $f:V(\Gamma) \to (0,+\infty)$ and $g:E(\Gamma) \to (0,+\infty)$. Graphs with weights are occasionally referred to as \emph{networks}. More details and basic notions of graphs can be found in the monographs~\cite{BoMu76, Wil96}. Given these choices, one could represent the weighted edges as the symmetric traceless square matrix $A\in \mathbb{M}(|V(\Gamma)|)$, where $a_{i,j} = g(e_{i,j})$ if $e_{i,j}\in E(\Gamma)$ and $a_{i,j}=0$ otherwise.\par

The graph amounts to the region which we want to divide into electoral districts; the vertex $v_i$ represents a town or neighborhood in the region ($f(v_i)$ being the population of $v_i$) and the edge $e_{i,j}$ represents a direct connection between two nearby towns/neighborhoods. The weight $g(e_{i,j})$ is a measure of how good the connection between the two towns is: the greater $g(e_{i,j})$, the better they are connected, e.g. $g(e_{i,j})$ represents the number of people that can go from one town to the other in a fixed amount of time.\par

We denote by
\[
M(\Gamma)\ =\ \sum_{v\in\Gamma}f(v)
\]
the total population, or \emph{mass}, of a graph.
\begin{definition}
Given a graph $\Gamma=\big( V(\Gamma) ; E(\Gamma) \big)$, we say that $\Gamma_k$ is a subgraph of $\Gamma$ if it is a graph such that $V(\Gamma_k)\subseteq V(\Gamma)$ and $E(\Gamma_k) \subseteq E(\Gamma)$. Moreover, we define its boundary $\partial \Gamma_k$ as
\[
\partial \Gamma_k := \left\{e_{i,j}\in E(\Gamma)\,:\, v_i \in V(\Gamma_k)\,, v_j \notin V(\Gamma_k)\right\}\,.
\]
\end{definition}

In our model we are interested in pairwise disjoint and connected partitions of the graph. For the sake of brevity, in the following we shall only say $N$-partition, truly referring to a pairwise disjoint, connected $N$-partition accordingly to the next definition.
\begin{definition}\label{def:partition}Let $1\,\leq\, N\,\leq\,|V(\Gamma)|$ be an integer. A \textit{pairwise disjoint and connected $N$-partition} of $\Gamma$ is a family of $N$ connected subgraphs $\{\Gamma_k\}_{k=1}^N$ such that
\[
\bigcup_k V(\Gamma_k) = V(\Gamma)\,, \qquad V(\Gamma_k) \cap V(\Gamma_h) = \emptyset\,, \forall k\neq h\,.
\]
We define the \emph{cut set} of the partition (or boundary of the partition) as
\[
\mathcal{C}(\{\Gamma_k\}):= \bigcup_k \partial \Gamma_k\,.
\]
\end{definition}

Given $N$, we would like to find a $N$-partition of $\Gamma$ such that all subgraphs $\Gamma_k$ have the same total weight of the vertex, i.e. the same mass
\[
M(\Gamma_k) = \sum_{V(\Gamma_k)}f(v)\ =\ \frac{1}{N} M(\Gamma)\,.
\]
In our electoral interpretation this means dividing the region $\Gamma$ in $N$ electoral districts with the same population, while minimizing the total weight of the cut set
\begin{equation}\label{eq:P-energy}
P(\{\Gamma_k\}):=\sum_{\mathcal{C}(\{\Gamma_k\})} g(e)
\end{equation}
to ensure that the districts are as ``compact'' as possible. We shall refer to the above as to the \emph{cut} or \emph{perimeter} energy of the partition. This of course is not possible since in general the set of $N$-partitions with all subgraphs with the same weight is empty. Thus, we want to allow the possibility for the weight of the subgraphs to differ from the arithmetic mean, at a cost, increasing with the difference from the mean. More precisely, we add the penalization term
\begin{equation}\label{eq:V-energy}
{\sigma(\{\Gamma_k\})}:= \sqrt{\sum_{k=1}^{N} \left(M(\Gamma_k) - \frac{M(\Gamma)}{N} \right)^2}\,,
\end{equation}
which we shall refer to as the \emph{deviation} energy of the partition.\par

Given $\lambda \in [0,1]$, we define the energy functional
\begin{equation}\label{eq:L-energy}
\mathcal{F}_\lambda(\{\Gamma_k\}) =\lambda P(\{\Gamma_k\}) + (1-\lambda)\sigma(\{\Gamma_k\})\,,
\end{equation}
which is a convex combination of the cut energy~\ref{eq:P-energy} and of the deviation energy~\ref{eq:V-energy}. We are interested in minimizing $\mathcal{F}_\lambda$ among all $N$-partitions of $\Gamma$. Notice that the minimization problem is invariant under the action of a uniform dilation of the weights $f$ and $g$, i.e. taking as weights $\theta f$ and $\theta g$, with $\theta >0$, in place of $f$ and $g$ does not change the minimizers. We shall call any partition minimizing~\ref{eq:L-energy} a \emph{minimal} or \emph{optimal} $N$-partition (relative to some $\lambda$).\par

It is worth noticing that for $\lambda=1$ the only energy we consider is the cut energy, i.e. the problem is similar to the minimal N-cut problem and there is no request on the mass of the districts to be near to the mean value.\par

\begin{remark}\label{rem:existence}
Existence of minimizers is trivial and follows straightforwardly from the finiteness of the graph $\Gamma$. Moreover, notice the following fact. Given $\hat \Gamma = \big(V(\hat \Gamma) ; E(\hat \Gamma)\big)$ a subgraph belonging to a partition of $\Gamma$, one can  suppose wlog that $E(\hat \Gamma) = \{e_{i,j}\in E(\Gamma)\,:\, v_i, v_j \in V(\hat \Gamma)\}$. This is because the first term of the functional $\mathcal{F}_\lambda$, i.e. the cut energy, is defined on the cut set $\mathcal{C}(\{\Gamma_k\})$ which is contained but not necessarily equal to $E(\Gamma)\setminus \cup_k E(\Gamma_k)$. Thus, the functional does not detect any internal changes in a subgraph $\hat \Gamma$, i.e. considering $\hat \Gamma$ or $\tilde \Gamma = (V(\hat \Gamma), E(\hat \Gamma)\setminus \{e\})$ is the same, provided that removing $e$ does not disconnect $\hat \Gamma$. This means that the energy of a partition $\{\Gamma_k\}$ is actually a function of the partition of the vertexes $\{V(\Gamma_k)\}$ and does not depend on the edges.
\end{remark}

Regarding uniqueness, note that fixed a $N$-partition its energy $\mathcal{F}_\lambda$ is affine linear in $\lambda$. Hence, it easily follows a uniqueness theorem.
\begin{theorem}\label{thm:uniqueness}
If a $N$-partition $\{\tilde \Gamma_k\}$ minimizes $\mathcal{F}_\lambda$ for two distinct values $\lambda_1 < \lambda_2$, then it is a minimal $N$-partition for all $\lambda \in [\lambda_1, \lambda_2]$. Moreover,
\begin{itemize}
\item[i)] either it is the unique minimizer in $(\lambda_1, \lambda_2)$;
\item[ii)] or there exists another partition $\{\hat \Gamma_k\}$ such that
\[
\mathcal{F}_\lambda (\{\hat \Gamma_k\}) = \mathcal{F}_\lambda (\{\tilde \Gamma_k\})\,, \qquad \forall \lambda \in [0,1]\,.
\]
\end{itemize}
\end{theorem}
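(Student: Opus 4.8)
The plan is to exploit the affine dependence of $\mathcal{F}_\lambda$ on $\lambda$ noted just before the statement. For each fixed $N$-partition $\{\Gamma_k\}$, writing $P = P(\{\Gamma_k\})$ and $\sigma = \sigma(\{\Gamma_k\})$, we have $\mathcal{F}_\lambda(\{\Gamma_k\}) = \sigma + \lambda(P - \sigma)$, an affine function of $\lambda$. Since $\Gamma$ is finite there are only finitely many $N$-partitions (and by Remark~\ref{rem:existence} the energy depends only on the vertex partition), so the lower envelope $m(\lambda) := \min_{\{\Gamma_k\}} \mathcal{F}_\lambda(\{\Gamma_k\})$ is the minimum of finitely many affine functions. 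The first thing I would record is that such a minimum is concave and piecewise linear in $\lambda$; this is the structural fact driving everything.

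For the first assertion I would sandwich $\{\tilde\Gamma_k\}$ between $m$ and itself on $[\lambda_1,\lambda_2]$. By hypothesis $\mathcal{F}_{\lambda_i}(\{\tilde\Gamma_k\}) = m(\lambda_i)$ for $i=1,2$. Fix $\lambda = t\lambda_1 + (1-t)\lambda_2 \in [\lambda_1,\lambda_2]$. Concavity of $m$ gives $m(\lambda) \geq t\,m(\lambda_1) + (1-t)\,m(\lambda_2)$, and since $\lambda\mapsto\mathcal{F}_\lambda(\{\tilde\Gamma_k\})$ is affine the right-hand side equals $\mathcal{F}_\lambda(\{\tilde\Gamma_k\})$. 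Combined with the trivial inequality $m(\lambda) \leq \mathcal{F}_\lambda(\{\tilde\Gamma_k\})$, this forces equality, i.e. $\{\tilde\Gamma_k\}$ is optimal for every $\lambda\in[\lambda_1,\lambda_2]$.

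A consequence I would extract immediately is that $m$ coincides with the affine function $\lambda\mapsto\mathcal{F}_\lambda(\{\tilde\Gamma_k\})$ on the whole interval $[\lambda_1,\lambda_2]$, hence $m$ is itself affine there. For the dichotomy I would argue by cases on whether (i) holds. If $\{\tilde\Gamma_k\}$ is the unique minimizer throughout $(\lambda_1,\lambda_2)$ we are in case (i). Otherwise there is an interior point $\lambda^*\in(\lambda_1,\lambda_2)$ and a distinct partition $\{\hat\Gamma_k\}$ with $\mathcal{F}_{\lambda^*}(\{\hat\Gamma_k\}) = m(\lambda^*) = \mathcal{F}_{\lambda^*}(\{\tilde\Gamma_k\})$. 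Consider the affine function $d(\lambda) := \mathcal{F}_\lambda(\{\hat\Gamma_k\}) - \mathcal{F}_\lambda(\{\tilde\Gamma_k\})$. On $[\lambda_1,\lambda_2]$ we have $\mathcal{F}_\lambda(\{\tilde\Gamma_k\}) = m(\lambda)$, so $d(\lambda) = \mathcal{F}_\lambda(\{\hat\Gamma_k\}) - m(\lambda) \geq 0$ there, while $d(\lambda^*) = 0$ at the interior point $\lambda^*$. An affine function that is nonnegative on an interval and vanishes at an interior point must vanish identically; hence $\mathcal{F}_\lambda(\{\hat\Gamma_k\}) = \mathcal{F}_\lambda(\{\tilde\Gamma_k\})$ for all $\lambda$, which is case (ii).

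The argument is essentially elementary convex-geometric bookkeeping, so I do not anticipate a serious analytic obstacle. The one point demanding care is the final step: I must use that $\lambda^*$ is an \emph{interior} point of $[\lambda_1,\lambda_2]$, since an affine function nonnegative on an interval is allowed to vanish at an endpoint without being identically zero. I would also double-check that the two alternatives are genuinely exhaustive and mutually exclusive — failure of uniqueness at a single interior $\lambda^*$ already produces the globally coincident partition of (ii), and conversely (ii) exhibits a second minimizer everywhere, contradicting (i).
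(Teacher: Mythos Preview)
Your proof is correct and follows essentially the same approach as the paper: both exploit the affine dependence of $\mathcal{F}_\lambda$ on $\lambda$ to obtain minimality on $[\lambda_1,\lambda_2]$, and then argue that a second minimizer at an interior point forces the two affine energy functions to coincide identically. Your framing via the concave lower envelope $m(\lambda)$ is a mild repackaging of the paper's direct partition-by-partition comparison, but the mathematical content is the same.
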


\begin{proof}
Given $\{\tilde \Gamma_k\}$ minimal in $\lambda_1$ and $\lambda_2$ ($\lambda_1<\lambda_2$) we have that
\begin{equation}\label{eq:ipotesi}
\mathcal{F}_{\lambda_1}(\{\tilde \Gamma_k\}) \le  \mathcal{F}_{\lambda_1}(\{\Gamma_k\})\,, \qquad \mathcal{F}_{\lambda_2}(\{\tilde \Gamma_k\}) \le  \mathcal{F}_{\lambda_2}(\{\Gamma_k\})\,, \, \forall \{\Gamma_k\}\,.
\end{equation}
Notice now that the energy of any given partition is affine linear in $\lambda$ and thus determined by its value in any two given points, or equivalently $\mathcal{F}_\lambda$ is a linear combination of $\mathcal{F}_{\lambda_1}$ and $\mathcal{F}_{\lambda_2}$. Hence by~\ref{eq:ipotesi}, the energy of any partition $\{\Gamma_k\}$ is greater than or equal to the energy of $\{\tilde \Gamma_k\}$ in $[\lambda_1, \lambda_2]$.\par

Suppose now there exists a partition $\{\hat \Gamma_k\}$ such that $\mathcal{F}_{\bar \lambda}(\{\tilde \Gamma_k\}) =  \mathcal{F}_{\bar \lambda}(\{\hat \Gamma_k\})$ for some $\bar \lambda \in (\lambda_1, \lambda_2)$. Since the energy is affine linear, inequalities~\ref{eq:ipotesi} paired with this last equality necessarily imply that the two partitions always have the same energy. The converse is readily achieved by using again the linearity w.r.t. $\lambda$ and the hypothesis that at least one of the inequalities in~\ref{eq:ipotesi} is strict.
\end{proof}

\begin{corollary}
If there are no partitions that have the same energy in any two points, then $\mathcal{F}_\lambda$ has a unique minimizer up to a finite set of transition values.
\end{corollary}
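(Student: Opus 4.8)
The plan is to exploit the affine-linear dependence on $\lambda$ together with the finiteness of the graph, reducing the statement to an elementary fact about a finite family of lines. First I would observe that, by Remark~\ref{rem:existence}, the energy of an $N$-partition depends only on the induced partition of the vertex set $V(\Gamma)$; since $V(\Gamma)$ is finite, there are only finitely many such partitions, say $\{\Gamma_k\}^{(1)}, \dots, \{\Gamma_k\}^{(m)}$. To each I associate the function $\ell_i(\lambda) := \mathcal{F}_\lambda(\{\Gamma_k\}^{(i)})$, which by~\ref{eq:L-energy} is affine linear in $\lambda$, so that its graph over $[0,1]$ is a line segment.

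Next I would translate the hypothesis. Two affine-linear functions that agree at two distinct points coincide identically; hence the assumption that no two partitions have the same energy at two points is precisely the statement that $\ell_1, \dots, \ell_m$ are pairwise distinct. In the language of Theorem~\ref{thm:uniqueness} this rules out alternative (ii), so that whenever a partition is optimal on a nondegenerate subinterval it is the \emph{unique} optimum there.

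The core step is then a lower-envelope argument. The minimal energy at a given $\lambda$ is $\min_{1\le i \le m}\ell_i(\lambda)$, and a minimizer fails to be unique exactly when at least two lines $\ell_i,\ell_j$ simultaneously attain this minimum, forcing $\ell_i(\lambda)=\ell_j(\lambda)$. Since the $\ell_i$ are pairwise distinct affine functions, each pair agrees at most at one value of $\lambda$; therefore the set of $\lambda$ at which any two of them meet is contained in the finite set $\bigcup_{i<j}\{\lambda : \ell_i(\lambda)=\ell_j(\lambda)\}$, of cardinality at most $\binom{m}{2}$. Declaring these the transition values $T$, for every $\lambda \in [0,1]\setminus T$ the lower envelope is attained by a single line, whence the optimal $N$-partition is unique.

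As for difficulty, there is essentially no analytic obstacle once the problem is recast in terms of lines; the only point requiring care is the correct reading of the hypothesis ``same energy in any two points'' and its equivalence to the distinctness of the $\ell_i$, i.e.\ to excluding case (ii) of Theorem~\ref{thm:uniqueness}. One should also note that several lines may meet at a common $\lambda$ (three or more envelopes crossing at one point); this merely coalesces several transition values and does not affect the finiteness of $T$.
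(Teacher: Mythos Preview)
Your argument is correct and is exactly the intended one: the paper states the corollary without proof, leaving it as an immediate consequence of Theorem~\ref{thm:uniqueness} together with the finiteness of the set of $N$-partitions noted in Remark~\ref{rem:existence}. Your lower-envelope description of the minimal energy as $\min_i \ell_i(\lambda)$ and the count $\binom{m}{2}$ of pairwise intersections make the implicit reasoning explicit, and your identification of the hypothesis with the exclusion of alternative~(ii) in Theorem~\ref{thm:uniqueness} is precisely the right reading.
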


\section{Discussion of (desirable) properties}\label{sec:properties}

We analyze here several properties we would like our districting model to have. We are interested in:
\begin{itemize}
\item[(i)] stability of the division in districts when a new town/neighborhood is built or abandoned;
\item[(ii)] stability of the division in districts when a new road connecting different cities is built (or an existing connection is destroyed);
\item[(iii)] possibility to force certain adjacent cities to be in the same district (up to suitably modifying the connection between the two);
\item[(iv)] possibility to force a city to form a district on its own (up to suitably modifying the connections between that city and its neighbors);
\item[(v)] stability of the model at a multi-layer districting, i.e.\ does creating ``super''-districts, say $N$, and then splitting them, say in $j$ ``sub''-districts each, yield the same result as directly creating $jN$ districts?
\end{itemize}

Our model does not grant properties (i), (ii), (v), as we show by counterexamples. Properties (iii) and (iv) hold true, under suitable conditions. The fact that properties (i), (ii) and (v) do not hold can be: either a weakness of the model deriving from the high degree of freedom of weights and may possibly be circumvented by modifying the form of the deviation energy; or an unavoidable drawback present in any possible model of districting. The latter is far from being unrealistic and it is actually something that often happens in social choices, e.g.
\begin{itemize}
\item Arrow in~\cite{Arr} proved that, if there are at least 3 choices, no electoral system satisfies at once Pareto's property (if all voters prefer $X$ over $Y$, $X$ is group-preferred to $Y$), independence from irrelevant choices (the group preference between $X$ and $Y$ only depends  on the single preferences between $X$ and $Y$) and there is no dictator (no single voter possesses the power to always determine the group's preference).
\item Balinski and Young in~\cite{BalYou} proved that, if there are at least 3 parties, no apportionment system simultaneously follows the quota rule (if a party fair share is between $n$ and $n+1$ it gets assigned either $n$ or $n+1$ seats), avoids the Alabama paradox (if the total number of seats is increased, no party's number of seats decreases) and avoids the population paradox (if party $A$ gets more votes and party $B$ gets fewer, no seat will be transferred from $A$ to $B$).
\end{itemize}

\subsection{Adding or removing a vertex}

We here briefly discuss what can happen whenever a vertex is removed from a graph (resp. added to). In our practical example of districting this could correspond to a city being abandoned  (resp. to a new city being built).

\begin{example}\label{+-vertex}We consider the graph in Figure~\ref{fig:+-vertex}, where the grayed-out vertex is the new/removed vertex and the dashed edges are the edges connecting it to the other vertexes, and look at its minimal $2$-partitions. We call $\Gamma$ the whole graph and $\hat \Gamma$ its subgraph without the grayed-out vertex and the dashed edges.\par

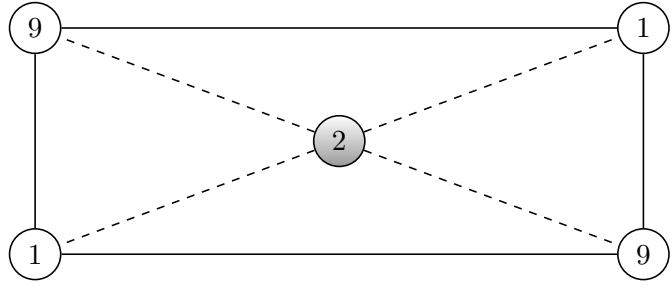
\begin{figure}[t]
\begin{center}
\begin {tikzpicture}[auto ,node distance =1.5 cm and 4cm ,on grid ,
semithick ,
state/.style ={ circle ,top color =white , bottom color = gray!00 ,
draw,gray , text=black , minimum width =0.5 cm}]
\node[state,  bottom color = gray!20 ] (1) {$2$};
\node[state] (2) [above right =of 1] {$1$};
\node[state] (3) [below right =of 1] {$9$};
\node[state] (4) [above left =of 1] {$9$};
\node[state] (5) [below left =of 1] {$1$};
\path (1) edge [dashed, bend right = 0]   (2);
\path (1) edge [dashed, bend right = 0]  (3);
\path (1) edge [dashed, bend right = 0]   (4);
\path (1) edge [dashed, bend right = 0]   (5);
\path (2) edge [bend left = 0]  (4);
\path (3) edge [bend left = 0]  (5);
\path (4) edge [bend right = 0]   (5);
\path (2) edge [bend right = 0]  (3);
\end{tikzpicture}
\caption{Removing or adding a vertex; numbers correspond to the weight of the vertexes; all edges are supposed to have weight $1$.}
\label{fig:+-vertex}
\end{center}
\end{figure}

We start by analyzing the graph \emph{without} the grayed-out vertex, $\hat \Gamma$. Notice that for $\lambda=1$ the minimal $2$-cut can be obtained by cutting edges with a total weight of $2$ (i.e. any two edges), while for ${\lambda=0}$ there are two partitions with null energy (cutting either the two horizontal edges, or the two vertical edges). As the partitions obtained by removing two parallel edges are minimal both for ${\lambda =0}$ and $\lambda = 1$ it follows from Theorem~\ref{thm:uniqueness} that they are minimal for all values of $\lambda$ and as well unique for $\lambda\in(0,1)$ as no other minimizer has the same energy on both the extrema of the interval.\par

As for the graph \emph{with} the grayed-out vertex, $\Gamma$, there are three different partitions to be considered, up to obvious symmetries:
\begin{itemize}
\item $D$, the partition of least deviation energy, i.e.\ the one for which one vertex with weight $9$ is in a district with the new vertex: the cut energy is $5$, while the deviation energy is null for which
\[
\mathcal{F}_\lambda(D)=5\lambda\,;
\]
\item $C$, the partition of least cut energy, i.e.\ the one for which one vertex with weight $9$ is a district on itself: the cut energy is $3$, while the deviation energy is $2\sqrt2$  for which
\[
\mathcal{F}_\lambda(C)=(3-2\sqrt2)\lambda+2\sqrt2\,;
\]
\item $M$, the one for which one vertex with weight $9$ is in a district with a vertex of weight $1$: the cut energy is $4$, while the deviation energy is $\sqrt2$  for which
\[
\mathcal{F}_\lambda(M)=(4-\sqrt2)\lambda+\sqrt2\,.
\]
\end{itemize}
The energy of the other possible $2$-partitions is strictly controlled from below by one of the above away from the extremal points, hence they can be a minimal $2$-partition only for $\lambda=0,1$. Thus the optimal partition is:
\begin{itemize}
\item $D$ if $0\leq\lambda<2-\sqrt2$;
\item $D$, $M$ and $C$ are equivalent if $\lambda=2-\sqrt2$;
\item $C$ if $2-\sqrt2<\lambda\leq1$.
\end{itemize}
Hence, whenever $0\leq\lambda<2-\sqrt2$ the minimal 2-partitions for $\Gamma$ and $\hat\Gamma$ are significantly different.\end{example}

\subsection{Adding or removing an edge}

We here briefly discuss what can happen whenever an edge is removed from a graph (resp. added to). In our practical example of districting this could correspond to a bridge collapsing  (resp. to a new road being built).

\begin{example}\label{+-edge} We consider the graph in Figure~\ref{fig:+-edge}, where the dashed edge is the new/removed edge, and look at its minimal $3$-partitions. If we call $\Gamma$ the whole graph and $e$ the dashed edge, we are then looking, respectively, at $\Gamma$ and its subgraph ${\hat \Gamma = \big( V(\Gamma); E(\Gamma)\setminus \{e\}\big)}$.\par

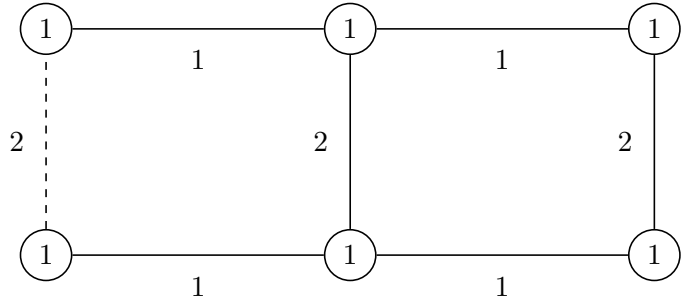
\begin{figure}[t]
\begin{center}
\begin {tikzpicture}[auto ,node distance =3 cm and 4cm ,on grid ,
semithick ,
state/.style ={ circle ,top color =white , bottom color = gray!00 ,
draw,gray , text=black , minimum width =.5 cm}]
\node[state] (1) {$1$};
\node[state] (2) [right =of 1] {$1$};
\node[state] (3) [right =of 2] {$1$};
\node[state] (4) [above =of 1] {$1$};
\node[state] (5) [above =of 2] {$1$};
\node[state] (6) [above =of 3] {$1$};
\path (1) edge [dashed, bend right = 0] node[left =0.15 cm] {$2$} (4);
\path (1) edge [bend right = 0] node[below =0.15 cm] {$1$} (2);
\path (4) edge [bend right = 0] node[below =0.15 cm] {$1$} (5);
\path (2) edge [bend right = 0] node[left =0.15 cm] {$2$} (5);
\path (2) edge [bend right = 0] node[below =0.15 cm] {$1$} (3);
\path (5) edge [bend right = 0] node[below =0.15 cm] {$1$} (6);
\path (3) edge [bend right = 0] node[left =0.15 cm] {$2$} (6);
\end{tikzpicture}
\caption{Removing or adding an edge; numbers correspond to the weight of the related vertexes and edges.} \label{fig:+-edge}
\end{center}
\end{figure}

We start by analyzing the graph \emph{with} the dashed edge, $\Gamma$. Notice that for $\lambda=1$ the minimal $3$-cut can be obtained by cutting edges with a total weight of $4$ (the three right edges, the three left ones or the four horizontal ones), while for $\lambda=0$ there are partitions with zero energy (cutting the 4 horizontal edges, or the left and center vertical edges and the two horizontal right edges, or the right and center vertical edges and the two horizontal left edges). As the partition obtained by removing the $4$ horizontal edges is minimal both for $\lambda =0$ and $\lambda = 1$ it follows from Theorem~\ref{thm:uniqueness} that it is minimal for all values of $\lambda$ and as well unique for $\lambda\in(0,1)$ as no other minimizer has the same energy on both the extrema of the interval. This $3$-partition though induces a $4$-partition for the subgraph $\hat \Gamma$ thus it can not be a minimizer of $\mathcal{F}_\lambda$ on $\hat \Gamma$ for any $\lambda$! In particular the minimal solutions relative to $\hat \Gamma$ are the following:
\begin{itemize}
\item for $0\leq\lambda \le 3-\sqrt{6}$ it is obtained by cutting the two horizontal left edges ($\mathcal{F}_\lambda=(2-\sqrt6)\lambda+\sqrt6$);
\item for $3-\sqrt{6}\le \lambda\leq1$ it is obtained by cutting the center vertical edge and the two horizontal right edges ($\mathcal{F}_\lambda=4\lambda$).
\end{itemize}
\end{example}

\subsection{Forcing two vertexes in the same component}
Let $\Gamma$ be a graph and $v_1$, $v_2$ be any two neighboring vertexes. There are obviously graphs where it is impossible to have $v_1$ and $v_2$ in the same district (e.g. if there are $N$ vertexes and we are looking for a $N$-partition). Yet, if there are $N$-partitions where these two vertexes are in the same district, one can force the minimal partition to be one of them, by a suitable modification of $g$ on the edge $e_{1,2}$. More precisely, the following result holds.

\begin{theorem}\label{2vertexes}Let $\Gamma$ be a graph and $v_1$, $v_2$ be two adjacent vertexes via the edge $e_{1,2}$ such that there exists a $N$-partition of $\Gamma$ for which $v_1$ and $v_2$ belong to the same subgraph. For any fixed $\lambda\neq0$, one can modify the value of $g$ on the edge $e_{1,2}$ in such a manner that $v_1$ and $v_2$ belong to the same subgraph of the $N$-partition minimizing $\mathcal{F}_\lambda$.
\end{theorem}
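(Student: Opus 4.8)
The plan is to exploit the fact that the edge $e_{1,2}$ enters the cut energy precisely when $v_1$ and $v_2$ are separated, so that inflating $g(e_{1,2})$ penalises exactly the partitions we wish to rule out. First I would split the (finite) collection of $N$-partitions of $\Gamma$ into two classes: the class $\mathcal{A}$ of partitions for which $v_1$ and $v_2$ lie in the same subgraph, and the class $\mathcal{B}$ of partitions for which they lie in distinct subgraphs. By hypothesis $\mathcal{A}\neq\emptyset$. From the definition of the cut set $\mathcal{C}(\{\Gamma_k\})=\bigcup_k\partial\Gamma_k$, the edge $e_{1,2}$ belongs to $\mathcal{C}(\{\Gamma_k\})$ if and only if exactly one of $v_1,v_2$ lies in some part, i.e.\ if and only if the partition belongs to $\mathcal{B}$; and in that case, since $\Gamma$ is simple, it contributes to the sum in~\ref{eq:P-energy} exactly once.

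Next I would record how the energy reacts to the modification. Replace $g(e_{1,2})$ by $g(e_{1,2})+t$ for some $t>0$, leaving $g$ unchanged on every other edge (the new weight is still positive, hence admissible). Since the deviation energy $\sigma$ depends, by~\ref{eq:V-energy}, only on the vertex masses $M(\Gamma_k)$ and therefore not on $g$ at all, it is untouched (cf.\ also Remark~\ref{rem:existence}). Consequently, for a partition in $\mathcal{A}$ the value of $\mathcal{F}_\lambda$ is unchanged, whereas for a partition in $\mathcal{B}$ the cut energy grows by exactly $t$, so that $\mathcal{F}_\lambda$ grows by exactly $\lambda t$.

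Finally I would close the argument by a comparison step. Because $\Gamma$ is finite there are finitely many $N$-partitions, so $m_{\mathcal{A}}:=\min_{\mathcal{A}}\mathcal{F}_\lambda$ is attained and finite, while $m_{\mathcal{B}}:=\min_{\mathcal{B}}\mathcal{F}_\lambda$ is either finite or, if $\mathcal{B}=\emptyset$, vacuously $+\infty$ (in which case there is nothing to prove). Using $\lambda\neq0$, i.e.\ $\lambda>0$, I would choose $t$ so large that $\lambda t>m_{\mathcal{A}}-m_{\mathcal{B}}$; then, for the modified weight, the minimal energy over $\mathcal{B}$ equals $m_{\mathcal{B}}+\lambda t>m_{\mathcal{A}}$, which is the (unchanged) minimal energy over $\mathcal{A}$. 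Hence every minimizer of $\mathcal{F}_\lambda$ for the modified weight lies in $\mathcal{A}$, that is, keeps $v_1$ and $v_2$ in the same subgraph.

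I expect no serious obstacle: the only points demanding care are the two structural observations that $e_{1,2}$ lies in the cut set exactly for the partitions in $\mathcal{B}$, and that $\sigma$ is independent of $g$, both immediate from the definitions. The hypothesis $\lambda\neq0$ is essential and should be highlighted, since for $\lambda=0$ the functional reduces to $\sigma$, which ignores $g$ entirely, so that no modification of $g(e_{1,2})$ can ever influence the minimizer.
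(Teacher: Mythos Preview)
Your argument is correct and follows essentially the same route as the paper: split the $N$-partitions according to whether $v_1,v_2$ lie together, observe that $e_{1,2}$ enters the cut energy only for the separating class, and use finiteness plus $\lambda>0$ to inflate $g(e_{1,2})$ past the energy gap. The only cosmetic difference is that you add an increment $t$ to the existing weight while the paper directly prescribes a threshold $g(e_{1,2})>\lambda^{-1}(K_\lambda-D_\lambda)$; the two formulations are equivalent.
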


\begin{proof}Let us divide the set $\Gamma_N$ of $N$-partitions of $\Gamma$ as
\[
\Gamma_N\ = \ \Gamma_{N,+}\cup\Gamma_{N,-}\,,
\]
where $\Gamma_{N,+}$ is the set of $N$-partitions putting $v_1$ and $v_2$ in the same district, and $\Gamma_{N,-}$ is the set of $N$-partitions putting $v_1$ and $v_2$ in different districts.\par

Let us suppose $\Gamma_{N,-}\neq\emptyset$, otherwise the claim is trivial. By hypothesis one has as well $\Gamma_{N,+}\neq\emptyset$. The $\lambda$-energy of any $N$-partition of $\Gamma_{N,+}$ does not depend on the value of $g(e_{1,2})$. As the  number of partitions is finite, we can define $K_\lambda$ to be the minimum of those energies.\par

Notice now that the $\lambda$-energy of any $N$-partition $\{\Gamma_k\} \in \Gamma_{N,-}$ can be expressed as
\[
\mathcal{F}_\lambda(\{\Gamma_k\})\ =\ \lambda g(e_{1,2})+D_\lambda(\{\Gamma_k\})\,,
\]
where $D_\lambda(\{\Gamma_k\})$ does not depend on the value of $g(e_{1,2})$. Again in virtue of the finiteness of the partitions, let $D_\lambda$ be the minimum of $D_\lambda(\{\Gamma_k\})$, for $\{\Gamma_k\}\in\Gamma_{N,-}$.\par

Thus, by modifying $g$ such that $g(e_{1,2})>\lambda^{-1}  (K_\lambda-D_\lambda)$ one gets that the minimal partition belongs to $\Gamma_{N,+}$.
\end{proof}

Notice that the above proof only works for a single pair and there is no clear way to force multiple pairs to stick together as the functional $\mathcal{F}_\lambda$ displays a non-local behaviour with respect to the weights.

\begin{remark}\label{split}
Conversely, if one were to try to split two adjacent vertexes, provided that there exists a suitable partition, would find as a condition $g(e_{1,2})<\lambda^{-1}  (K_\lambda-D_\lambda)$, which is impossible to achieve if the RHS is non positive. There are indeed very easy cases for which this exact behaviour occurs. Take for instance the graph $\Gamma$ in Figure~\ref{fig:split}, where $M>1$. For any $\lambda < \sqrt{2}(1+\sqrt2)^{-1}$ the leftmost vertexes will stick together no matter how big $M$ is or how small $\varepsilon$ is.
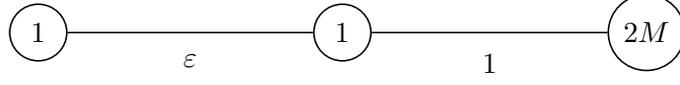
\begin{figure}[t]
\begin{center}
\begin {tikzpicture}[auto ,node distance =3 cm and 4cm ,on grid ,
semithick ,
state/.style ={ circle ,top color =white , bottom color = gray!00 ,
draw,gray , text=black , minimum width =.75 cm}]
\node[state] (1) {$1$};
\node[state] (2) [right =of 1] {$1$};
\node[state] (3) [right =of 2] {$2M$};
\path (1) edge [bend right = 0] node[below =0.15 cm] {$\varepsilon$} (2);
\path (2) edge [bend right = 0] node[below =0.15 cm] {$1$} (3);
\end{tikzpicture}
\caption{Splitting two vertexes is not always possible by simply modifying the weight of their common edge.} \label{fig:split}
\end{center}
\end{figure}
\end{remark}

\subsection{Forcing a vertex to be isolated}

By a straightforward application of the pigeonhole principle we can observe the following fact. Suppose that a graph $\Gamma$ has vertex connectivity of $1$ about the vertex $\bar v$, i.e., the subgraph $\hat \Gamma$ obtained by removing from $\Gamma$ the vertex $v$ and all related edges is disconnected. If $\hat \Gamma$ has  $k= N+l$ connected components, $l\in\mathbb{N}$, then in any $N$-partition of $\Gamma$ at least $l+1$ of these components belong to the subgraph containing $v$. In such a situation thus there is no way to force $v$ to be ``isolated'' i.e. to form a district on its own.

It is then of interest, assuming the necessary assumption that there exist $N$-partitions containing the singleton vertex $\Gamma_v:=\big( \{v\}; \emptyset \big)$, to know if one can force the vertex to be isolated by modifying the weights of the edges in $\partial \Gamma_v$. We are able to prove that this is possible for values of $\lambda$ near to $1$ limitedly to $2$-partitions. More precisely, the following theorem holds.
\begin{theorem}\label{thm:isolated}
Let $v\in V(\Gamma)$ be a vertex, $\Gamma_v := \big( \{v\}; \emptyset \big)$ the subgraph consisting of the singleton vertex and $\Gamma_v^c := \big(V(\Gamma)\setminus \{v\}; E(\Gamma)\setminus \partial \Gamma_v \big)$ its complement subgraph. If $\{\Gamma_v, \Gamma^c_v \}$ is a $2$-partition, there exist $\bar \varepsilon>0$ and $\bar \lambda \in [0,1)$ such that if $\sum_{\partial \Gamma_v}g(e) \le \bar \varepsilon$ then, $\{\Gamma_v, \Gamma^c_v \}$ is the minimal $2$-partition (up to removing edges of $\Gamma^c_v$ which do not disconnect it) of $\mathcal{F}_\lambda$ with $\lambda \in (\bar \lambda, 1]$.
\end{theorem}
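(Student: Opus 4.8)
The plan is to compare the energy of the isolating partition $P_0 := \{\Gamma_v, \Gamma_v^c\}$ with that of every $2$-partition that keeps $v$ together with at least one further vertex, exploiting that $\mathcal{F}_\lambda$ is affine in $\lambda$ and that lowering $g$ on $\partial\Gamma_v$ affects neither the deviation energies nor the weights of the remaining edges. If there are no non-isolating $2$-partitions the statement is immediate, so assume the contrary (in particular $|V(\Gamma)|\ge 3$).

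First I would record $\mathcal{F}_\lambda(P_0)$ explicitly. Writing $S := \partial\Gamma_v$ and $\varepsilon' := \sum_{e\in S}g(e)\le\bar\varepsilon$, one has $P(P_0)=\varepsilon'$; moreover, since $M(\Gamma_v)=f(v)$ and $M(\Gamma_v^c)=M(\Gamma)-f(v)$, the deviation energy equals the fixed constant $d_0:=\sigma(P_0)=\sqrt2\,\bigl|f(v)-M(\Gamma)/2\bigr|$, which does not depend on the modification of $g$ on $S$. Hence $\mathcal{F}_\lambda(P_0)=\lambda\varepsilon'+(1-\lambda)d_0$, tending to $\varepsilon'$ as $\lambda\to1$.

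The crucial step is a lower bound on the cut energy of non-isolating partitions. Let $\{A,B\}$ be a $2$-partition with $v\in A$ and $|A|\ge2$. I claim its cut set contains an edge outside $S$: were every edge between $A$ and $B$ adjacent to $v$, the graph $\Gamma_v^c$, whose vertex set is the disjoint union of the nonempty sets $A\setminus\{v\}$ and $B$, would contain no edge joining these two sets, contradicting the hypothesis that $\Gamma_v^c$ is connected. Therefore $P(\{A,B\})\ge g_0$, where $g_0:=\min_{e\in E(\Gamma)\setminus S}g(e)>0$ is a fixed positive constant unaffected by lowering $g$ on $S$, and since $\sigma\ge0$ we get $\mathcal{F}_\lambda(\{A,B\})\ge\lambda g_0$ for every non-isolating $\{A,B\}$.

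It then remains to fix the thresholds. Choose any $\bar\varepsilon<g_0$; the inequality $\mathcal{F}_\lambda(P_0)<\mathcal{F}_\lambda(\{A,B\})$ is implied by $\lambda\varepsilon'+(1-\lambda)d_0<\lambda g_0$, i.e.\ by $(1-\lambda)d_0<\lambda(g_0-\varepsilon')$, whose right-hand side is positive while its left-hand side vanishes at $\lambda=1$; solving gives $\lambda>d_0/(g_0-\varepsilon'+d_0)$. Since this threshold is increasing in $\varepsilon'$, setting $\bar\lambda:=d_0/(g_0-\bar\varepsilon+d_0)\in[0,1)$ makes the strict inequality hold for all $\varepsilon'\le\bar\varepsilon$ and all $\lambda\in(\bar\lambda,1]$. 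As by Remark~\ref{rem:existence} the energy depends only on the vertex partition, this proves $P_0$ is the unique minimizer up to internal edge removals. The genuine obstacle is the connectivity argument of the third paragraph; the rest is the comparison of two affine functions of $\lambda$.
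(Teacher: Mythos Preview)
Your proof is correct and follows the same strategy as the paper: both hinge on the observation that the cut set of any non-isolating $2$-partition must contain at least one edge outside $\partial\Gamma_v$, and then compare affine functions of $\lambda$. Your version is in fact more explicit than the paper's: you supply the connectivity argument for $\mathcal{C}(\{A,B\})\setminus S\neq\emptyset$ (which the paper merely asserts), and you give concrete formulas for $\bar\varepsilon$ and $\bar\lambda$ in place of the paper's $O(1-\lambda)$ estimate, but the underlying idea is identical.
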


\begin{proof}
Start noticing that if $\{\Gamma_v, \Gamma^c_v \}$ is a $2$-partition, then it is the unique $2$-partition containing the subgraph $\Gamma_v$ (up to removing edges of $\Gamma^c_v$ which do not disconnect it as noted in Remark~\ref{rem:existence}). For $\{\Gamma_v, \Gamma^c_v \}$, one has
\[
\mathcal{F}_\lambda(\{\Gamma_v, \Gamma^c_v \}) = \lambda \sum_{\partial \Gamma_v}g(e) +(1-\lambda) \sigma(\{\Gamma_v, \Gamma^c_v \})\,,
\]
while for any $2$-partition $\{\tilde \Gamma_k\}$ not containing $\Gamma_v$ one has
\[
\mathcal{F}_\lambda(\{\tilde \Gamma_k\}) = \lambda \sum_{\mathcal{C}(\{\tilde \Gamma_k\}) \cap \partial \Gamma_v }g(e) + \lambda \sum_{\mathcal{C}(\{\tilde \Gamma_k\}) \setminus \partial \Gamma_v} g(e) +(1-\lambda) \sigma(\{\tilde \Gamma_k\})\,.
\]
Our claim corresponds to proving that $\mathcal{F}_\lambda(\{\tilde \Gamma_k\}) - \mathcal{F}_\lambda(\{\Gamma_v, \Gamma^c_v \}) \ge 0$ for some $\lambda$ near to $1$ and for an appropriately small ``perimeter'' of $\Gamma_v$, i.e. $\sum_{\partial \Gamma_v}g(e)$. We have
\begin{align*}
\mathcal{F}_\lambda(\{\tilde \Gamma_k\}) - \mathcal{F}_\lambda(\{\Gamma_v, \Gamma^c_v \}) &= O(1-\lambda) +\lambda \sum_{\mathcal{C}(\{\tilde \Gamma_k\}) \setminus \partial \Gamma_v} g(e)  - \lambda \sum_{\partial \Gamma_v \setminus \mathcal{C}(\{\tilde \Gamma_k\})}g(e)\\
&\ge O(1-\lambda) + \lambda \sum_{\mathcal{C}(\{\tilde \Gamma_k\}) \setminus \partial \Gamma_v} g(e)  -\varepsilon\,.
\end{align*}
As for any $2$-partition not containing $\Gamma_v$ one has that $\mathcal{C}(\{\tilde \Gamma_k\}) \setminus \partial \Gamma_v \neq \emptyset$ the claim follows for $\lambda$ close to $1$ and $\varepsilon<<1$, i.e. the sum of the weights of the edges in $\partial \Gamma_v$.
\end{proof}

\begin{remark}\label{rem:isolated}Theorem~\ref{thm:isolated} is in agreement with the idea that assigning a zero weight to an edge means that the edge is missing. Notice that the hypothesis can be equivalently reformulated by suitably modifying all weights not in $\partial \Gamma_v$ and taking each one of them greater than some $M>>1$.
\end{remark}

The result is somewhat weak as it holds only for $2$-partitions. It would be desirable to extend it to $N$-partitions but it is unclear how to do it, as in the energy expression for a partition containing $\Gamma_v$ would appear the additional term $\lambda \sum_{\mathcal{C}(\{\hat \Gamma_k\}) \setminus \partial \Gamma_v} g(e)$ on which we have no control.

It is as well unclear if fixed any $\lambda>0$, by suitably modifying the perimeter of $\partial \Gamma_v$, one can force the vertex to be isolated, even in the $2$-partition case. Clearly, one would expect that as $\lambda\to 0$, necessarily $\sum_{\partial \Gamma_v} g(e) \to 0$ as well.

\subsection{Refining the number of districts}

Given $j\in \mathbb{N}$, we shall say that a $jN$-partition $\{\hat \Gamma_i\}_{i=1}^{jN}$ is a \emph{$j$-refining} of a $N$-partition $\{\Gamma_k\}_{k=1}^N$ if, up to relabelling, $\{\hat \Gamma_i\}_{i=(k-1)j+1}^{kj}$ is a $j$-partition of $\Gamma_k$. One might wonder if some ``refining'' property holds. For instance, it would be desirable that any (or at least one) minimal $jN$-partition is a refining of a minimal $N$-partition.

This is exactly the situation where you have two different kinds of elections (e.g. Italian and European elections) and one would like to form ``super''-districts for one election just by gluing together some districts of the other election.

Clearly, a necessary condition to allow such a situation is that there exists an optimal $N$-partition such that each of its subgraph has \emph{at least} $j$ vertexes. Even if this necessary condition is satisfied, this refining property can fail, as the following example shows.

\begin{figure}[t]
\begin{center}
\begin {tikzpicture}[auto ,node distance =2.5 cm and 3.5cm ,on grid ,
semithick ,
state/.style ={ circle ,top color =white , bottom color = gray!00 ,
draw,gray , text=black , minimum width =.5 cm}]
\node[state] (1) {$1$};
\node[state] (2) [right =of 1] {$1$};
\node[state] (3) [right =of 2] {$1$};
\node[state] (4) [right =of 3] {$1$};
\node[state] (5) [above =of 1] {$1$};
\node[state] (6) [above =of 2] {$1$};
\node[state] (7) [above =of 3] {$1$};
\node[state] (8) [above =of 4] {$1$};
\path (1) edge [bend right = 0] node[below =0.15 cm] {$4$} (2);
\path (2) edge [bend right = 0] node[below =0.15 cm] {$10$} (3);
\path (3) edge [bend right = 0] node[below =0.15 cm] {$4$} (4);
\path (4) edge [bend right = 0] node[left =0.15 cm] {$1$} (8);
\path (8) edge [bend right = 0] node[below =0.15 cm] {$2$} (7);
\path (7) edge [bend right = 0] node[below =0.15 cm] {$10$} (6);
\path (6) edge [bend right = 0] node[below =0.15 cm] {$2$} (5);
\path (5) edge [bend right = 0] node[left =0.15 cm] {$1$} (1);
\end{tikzpicture}
\caption{A graph where the optimal $4$-partition is not a $2$-refining of the optimal $2$-partition.} \label{fig:refining}
\end{center}
\end{figure}
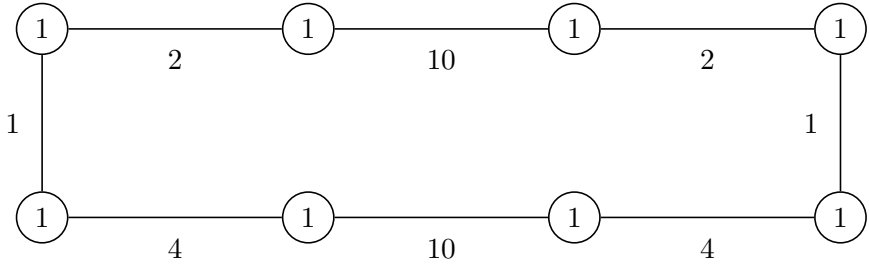

\begin{example}\label{refining} Consider the graph $\Gamma$ of Figure~\ref{fig:refining}. The unique optimal $2$-cut is obtained by removing the two edges with weight $1$. This same partition has zero deviation energy, hence by Theorem~\ref{thm:uniqueness} it is an optimal $2$-partition for every $\lambda$ and the unique one for $\lambda\neq1$.\par

Removing the four edges with weights $2$ and $4$, gives a $4$-partition $D$ with zero deviation energy and cut energy $12\lambda$. Thus, any $4$-partition whose cut set contains an edge of weight $10$ cannot be optimal.\par

Hence, up to symmetries, there is only one \emph{possibly optimal} $4$-partition which is a $2$-refining of the minimal $2$-partition: the one in which are cut one weight $2$ and one weight $4$ edges. This partition $R$ has $\lambda$-energy
\[
\mathcal{F}_\lambda(R)\ =\ 8\lambda+2(1-\lambda)\,,
\]
and it is never minimal. Indeed, either $D$ or $C$, the  partition  whose cut energy is $6\lambda$, is the optimal one. More precisely,
\begin{align*}
\mathcal{F}_\lambda(D)=12\lambda\,, && \mathcal{F}_\lambda(C)=6\lambda+\sqrt6(1-\lambda)\,,
\end{align*}
thus,
\begin{itemize}
\item if $0\leq\lambda<\frac{\sqrt6}{6+\sqrt6}$, $D$ is the optimal $4$-partition;
\item if $\frac{\sqrt6}{6+\sqrt6}<\lambda\leq1$, $C$ is the optimal $4$-partition,
\end{itemize}
with $\bar \lambda = \sqrt{6}/(6+\sqrt{6})$ being the transition value where they are both minimizers.
\end{example}

Even if we assume that a minimal $jN$-partition is a $j$-refining of a minimal $N$-partition, then $\{\hat \Gamma_i\}_{i=(k-1)j+1}^{kj}$ may not be a \emph{minimal}-$j$-partition of $\Gamma_k$, as shown by the following example.

\begin{example} Consider the graph $\Gamma$ in Figure~\ref{fig:strref}, and fix\footnote{\label{l=1/2}The choice $\lambda=\frac12$ has nothing special. For any $\lambda\in(0,1)$ the weights of the graph $\Gamma$ can be suitably chosen to have the same behavior of the shown example: it is sufficient to multiply the weights on vertexes by $\frac1{2\lambda}$ and those on edges by $\frac1{2(1-\lambda)}$ and the energy remains the same.} $\lambda=\frac12$. Let us consider first the $2$-partitions. The optimal $2$-cut ($\lambda=1$), $C_2$, is obtained by removing the two edges of weight $4$. It has energy
\[
\mathcal{F}_\lambda(C_2)\ =\ 8\lambda+6\sqrt{2}(1-\lambda)\,,
\]
and for $\lambda=\frac12$, $\mathcal{F}_{\frac 12}(C_2)\ =\ 4+3\sqrt{2}<\frac{17}{2}$. Hence, all other  $2$-partitions whose cut energy is greater than $17$ cannot be an optimal $2$-partition (w.r.t. the choice $\lambda=\frac 12$). This leaves only two possible partitions with cut energy $16$. Both of them have deviation energy greater than that of $C_2$. Hence, $C_2$ is the optimal $2$-partition of $\Gamma$ w.r.t. the choice $\lambda=\frac12$.

Consider now the $4$-partitions. The optimal $4$-cut ($\lambda=1$) is obtained by removing all edges but one with weight $16$. Among the $2$ possible choices, let $C_4$ be that with smaller deviation energy, i.e. the case where the $14$ and $4$ weight vertexes are grouped together. This $4$-partition has energy
\[
\mathcal{F}_\lambda(C_4)\ =\ 36\lambda+2\sqrt{17}(1-\lambda)\,,
\]
and for $\lambda=\frac12$, $\mathcal{F}_{\frac 12}(C_4)\ =\ 18+\sqrt{17}$. There is only another $4$-partition with smaller deviation energy, $D_4$, in which the $10$ and $4$ weight  vertexes form a district. Its energy is
\[
\mathcal{F}_\lambda(D_4)\ =\ 40\lambda+6(1-\lambda)\,,
\]
and for $\lambda=\frac12$ it is $23$, strictly more than that of $C_4$. Hence, $C_4$ is the optimal $4$-partition for $\lambda=\frac12$.

\begin{figure}[t]
\begin{center}
\begin {tikzpicture}[auto ,node distance =2.5 cm and 3.5cm ,on grid ,
semithick ,
state/.style ={ circle ,top color =white , bottom color = gray!00 ,
draw,gray , text=black , minimum width = .8 cm}]
\node[state] (1) {$20$};
\node[state] (2) [right =of 1] {$14$};
\node[state] (3) [right =of 2] {$4$};
\node[state] (4) [above =of 1] {$20$};
\node[state] (5) [above =of 2] {$10$};
\path (1) edge [bend right = 0] node[left =0.15 cm] {$16$} (4);
\path (4) edge [bend right = 0] node[below =0.15 cm] {$4$} (5);
\path (5) edge [bend right = 0] node[above =0.15 cm] {$12$} (3);
\path (3) edge [bend right = 0] node[below =0.15 cm] {$16$} (2);
\path (2) edge [bend right = 0] node[below =0.15 cm] {$4$} (1);
\end{tikzpicture}
\caption{A graph where the optimal $4$-partition is a $2$-refining of the optimal $2$-partition, but does not induce the optimal $2$-partitions on its components.} \label{fig:strref}
\end{center}
\end{figure}
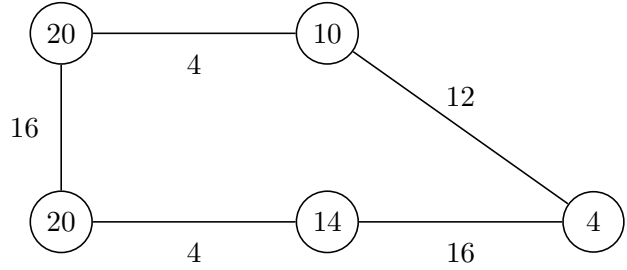
Observe that $C_4$ is a $2$-refining of $C_2$, yet it does not define minimal $2$-partitions for each of the subgraphs of $C_2$. Consider indeed the district with $3$ vertexes of $C_2$. There are two ways to partition it in $2$ subgraphs, one induced by $C_4$, denoted by $\hat C_4$, and one induced by $D_4$, denoted by $\hat D_4$. Their energies are
\begin{align*}
\mathcal{F}_\lambda(\hat C_4)\ =\ 12\lambda+4\sqrt{2}(1-\lambda)\,, && \mathcal{F}_\lambda(\hat D_4)\ =\ 16\lambda\,.
\end{align*}
Relatively to $\lambda=\frac12$, one has
\[
\mathcal{F}_{\frac 12}(\hat C_4)\ =\ 6+2\sqrt{2}\ >\ 8\ =\ \mathcal{F}_{\frac 12}(\hat D_4)\,.
\]
Thus, even though $C_4$ is a $2$-refining of $C_2$ it does not induce minimal-$2$-partitions of the subgraphs of $C_2$.
\end{example}

\section{Other norms for the deviation term}\label{sec:p-norms}

So far we have considered as penalization term the standard deviation from the mean (i.e. the so called $2$-nd central moment), but one could use any $p$-th central moment, defining accordingly the $p$-deviation energy. In principle, fixed $N\in \mathbb{N}$ for any $N$-partition one can think of the $p$-deviation energy as the $p$-norm of the vector ${\bf x}\in\mathbb{R}^N$, where ${\bf x}_k=M(\Gamma_k)-\frac{M(\Gamma)}{N}$, and in general any choice of norm of such a vector can be considered; further, it would be enough to define the penalization term as any function of the vector ${\bf x}\in\mathbb{R}^N$ increasing along half-lines from the origin and symmetric for coordinates swaps and symmetric w.r.t. the origin.\par

It is immediate to see that for any fixed $p$ or any more general deviation energy, Theorem~\ref{thm:uniqueness} remains valid, with the very same proof, since it depends only on linearity of the energy functional on $\lambda$. The same holds true for Theorems~\ref{2vertexes} and~\ref{thm:isolated}.\par

In the following we briefly discuss some stability w.r.t. the choice of the $p$-norm and how to modify the examples to make sure the counterexamples shown above still work for any choice of $p$. We shall denote the $p$-th central moment of $\{\Gamma_k\}$ by $\sigma_p(\{\Gamma_k\})$, which for the sake of completeness we recall to be
\begin{equation*}\label{eq:Vp-energy}
\sigma_p(\{\Gamma_k\})\ =\
\sqrt[p]{\sum_{k=1}^N\left|M(\Gamma_k)-\frac{M(\Gamma)}{N}\right|^p}\,,
\end{equation*}
for $ p\in[1,\infty)$ and
\begin{equation*}\label{eq:Vinfty-energy}
\sigma_\infty(\{\Gamma_k\})=\max_{k}\left|M(\Gamma_k)-\frac{M(\Gamma)}{N}\right|\,,
\end{equation*}
for $p=\infty$. Accordingly, we shall denote by $\mathcal{F}_{\lambda,p}$ the more general functional depending both on $\lambda\in[0,1]$ and on $p\in[1,+\infty]$ given by
\begin{equation*}\label{eq:Lp-energy}
\mathcal{F}_{\lambda,p}(\{\Gamma_k\}) =\lambda P(\{\Gamma_k\}) + (1-\lambda)\sigma_p(\{\Gamma_k\})\,.
\end{equation*}

\subsection{Stability of first transition value w.r.t. \texorpdfstring{$p$}{p}-norms} Notice that for any given $p$, the choice of $\lambda=0$ yields as minimizers the configurations whose $p$-th central moment is smallest. Among these in virtue of the generalization of Theorem~\ref{thm:uniqueness}, we can select one that is a minimizer in the closed interval $\lambda\in[0,\lambda^F(p,\Gamma)]$, with $\lambda^F(p,\Gamma)\in(0,1]$, while it is not for any $\lambda>\lambda^F(p,\Gamma)$. We shall call $\lambda^F(p,\Gamma)$ the \textit{first transition value}. In general, one would expect that by taking greater values of $p$, one would force the penalization term to be more dominant, and this would correspond to having a greater transition value.\par

It is easy to see that the exact opposite happens, at least for $2$-partitions. Indeed, for $N=2$ a partition is uniquely determined by one of its subgraphs and the two subgraphs are such that their deviation from the mean, i.e. $|M(\Gamma_i)- M(\Gamma)/2|$, is the same. Hence, the vector ${\bf x}\in\mathbb{R}^2$ lies in the $1$-dimensional subspace generated by $(1,-1)\in\mathbb{R}^2$ thus all norms are actually the same up to a multiple on this subspace. More precisely,
\begin{align}\label{eq:2-part-p}
\sigma_p(\{\Gamma_k\})\, =\, \sqrt[p]{2} \cdot \sigma_{\infty}(\{\Gamma_k\})\,, && \sigma_q(\{\Gamma_k\})\, =\, \frac{\sqrt[q]{2}}{\sqrt[p]{2}} \cdot \sigma_p(\{\Gamma_k\})\,.
\end{align}
From the above equalities, it immediately follows that $\sigma_p$ is decreasing with respect to $p$ and the first transition value monotonically decreases as $p$ grows, meaning that the deviation optimal $2$-partition is stable for a shorter interval the greater $p$ we choose.%\par

This does not necessarily happen when $N\ge 3$, where we only have the inequalities
\[
\sigma_q(\{\Gamma_k\}) \le \sigma_p(\{\Gamma_k\})\ \leq\ \sqrt[p]{N} \cdot \sigma_{\infty}(\{\Gamma_k\})\,,
\]
for $1\le p<q\le \infty$, from which one can not derive a monotonic behaviour of $\lambda^F(p, \Gamma)$.\par

\subsection{Choosing \texorpdfstring{$\lambda$}{l} and \texorpdfstring{$p$}{p}}
In the previous subsection we introduced the first transition value; in a completely analogous way one can define the \emph{last transition value}, $\lambda^L(p, \Gamma) \in [0,1)$ in such a way that there is a minimal cut configuration which is a minimizer in the closed interval $[\lambda^L(p, \Gamma), 1]$, while it is not for any $\lambda<\lambda^L(p, \Gamma)$. Trivially, unless the minimal cut is as well a minimal deviation, one has the large inequality $\lambda^F(p, \Gamma)\le \lambda^L(p, \Gamma)$. On the one hand, one always wants to consider a parameter $\lambda \le \lambda^L(p, \Gamma)$ in order to enforce some control on the deviation from the mean, otherwise one could end up with completely disproportioned (w.r.t. the mass) subgraphs. It would be desirable to choose $p$ in such a way that the strict inequality $\lambda^F(p, \Gamma)< \lambda^L(p, \Gamma)$ holds. In such a way: there would be a first interval $[0, \lambda^F]$ where relaxing the constraint of having as equal as possible masses does not produce any change in the minimum; there would be another interval $(\lambda^F, \lambda^L)$ where the minima are neither minimal cuts nor minimal $p$-deviations from the mean, i.e. there is some competition between the two energy terms.

\subsection{Generalization of the examples} In view of~\ref{eq:2-part-p} valid for $2$-partitions, Examples~\ref{+-vertex} and~\ref{+-edge} and Remark~\ref{split} in the previous sections remain unchanged, up to modifying the weights of the vertexes by a factor of $\sqrt[p]{2}/\sqrt{2}$ for $p\in[1,+\infty)$ or of $1/\sqrt{2}$ for $p=+\infty$ (see Figures~\ref{fig:+-vertex} and~\ref{fig:+-edge}).\par

In order to generalize Example~\ref{refining} some more effort is needed. We modify the graph of Figure~\ref{fig:refining} by multiplying  some of the weights on the edges by a factor $\alpha=\alpha(p)\geq1$ to be determined later on. The relevant computations are contained in the following example.

\begin{figure}[t]
\begin{center}
\begin {tikzpicture}[auto ,node distance =2.5 cm and 3.5cm ,on grid ,
semithick ,
state/.style ={ circle ,top color =white , bottom color = gray!00 ,
draw,gray , text=black , minimum width =.5 cm}]
\node[state] (1) {$1$};
\node[state] (2) [right =of 1] {$1$};
\node[state] (3) [right =of 2] {$1$};
\node[state] (4) [right =of 3] {$1$};
\node[state] (5) [above =of 1] {$1$};
\node[state] (6) [above =of 2] {$1$};
\node[state] (7) [above =of 3] {$1$};
\node[state] (8) [above =of 4] {$1$};
\path (1) edge [bend right = 0] node[below =0.15 cm] {$4\alpha$} (2);
\path (2) edge [bend right = 0] node[below =0.15 cm] {$10\alpha$} (3);
\path (3) edge [bend right = 0] node[below =0.15 cm] {$4\alpha$} (4);
\path (4) edge [bend right = 0] node[left =0.15 cm] {$1$} (8);
\path (8) edge [bend right = 0] node[below =0.15 cm] {$2$} (7);
\path (7) edge [bend right = 0] node[below =0.15 cm] {$10\alpha$} (6);
\path (6) edge [bend right = 0] node[below =0.15 cm] {$2$} (5);
\path (5) edge [bend right = 0] node[left =0.15 cm] {$1$} (1);
\end{tikzpicture}
\caption{A graph where the optimal $4$-partition is not a $2$-refining of the optimal $2$-partition for suitable choices of $\alpha=\alpha(p)$.} \label{fig:p-refining}
\end{center}
\end{figure}
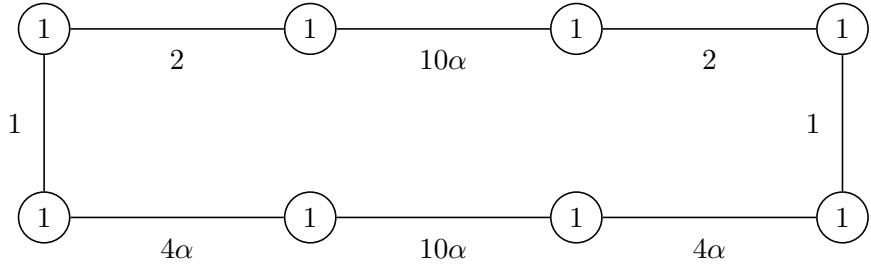

\begin{example}\label{ex:cex_p} Let $p\in[1,+\infty)$ be fixed. Consider the graph $\Gamma$ of Figure~\ref{fig:p-refining}, with $\alpha=\alpha(p)\geq1$ to be determined later on. The unique optimal $2$-cut is obtained by removing the two edges with weight $1$. This same partition has zero deviation energy. Hence by the generalization of Theorem~\ref{thm:uniqueness} to any $p$ discussed at the beginning of Section~\ref{sec:p-norms}, it is an optimal $2$-partition for every $\lambda$ and the unique one for $\lambda\neq1$.\par

Removing the four edges with weights $2$ and $4\alpha$ gives a $4$-partition $D$ with zero deviation energy and cut energy $(4+8\alpha)$. Thus, any $4$-partition whose cut set contains an edge of weight $10\alpha$ cannot be optimal.%\par

Therefore, up to trivial symmetries, there is only one \emph{possibly optimal} $4$-partition which is a $2$-refining of the minimal $2$-partition: the one whose cut set consists of two edges, one of weight $2$ and one of weight $4\alpha$. This partition, which we denote by $I$, has $(\lambda,p)$-energy
\[
\mathcal{F}_{\lambda,p}(I)\ =\ (4+4\alpha)\lambda+\sqrt[p]{4}(1-\lambda)\,.
\]
By choosing $\alpha$ big enough, we can ensure it to be never minimal. Indeed, by calling $C$ the  partition  whose cut energy is $6$, we have
\begin{align*}
\mathcal{F}_{\lambda, p}(D)=(4+8\alpha)\lambda\,, && \mathcal{F}_{\lambda, p}(C)=6\lambda+\sqrt[p]{2+2^p}(1-\lambda)\,.
\end{align*}
Thus,
\begin{align*}
\mathcal{F}_{\lambda,p}(D)<\mathcal{F}_{\lambda,p}(I)\,, && &\text{if $0\leq\lambda< \frac{ \sqrt[p]{4}}{4\alpha + \sqrt[p]{4}}$}\,;\\
 \mathcal{F}_{\lambda,p}(C)<\mathcal{F}_{\lambda,p}(I)\,, && &\text{if $\frac{\sqrt[p]{2+2^p}-\sqrt[p]{4}}{4\alpha-2+\sqrt[p]{2+2^p}-\sqrt[p]{4}}<\lambda\leq1$}\,.
\end{align*}
Therefore by selecting $\alpha$ such that
\[
\alpha >  \frac 12 \cdot \frac{\sqrt[p]{4}}{2\sqrt[p]{4}-\sqrt[p]{2+2^p}}\,,
\]
it is immediate to see that the two intervals above overlap and thus $I$ can never be optimal.
\end{example}

Example~\ref{ex:cex_p} does not work for the choice of the $\infty$-norm, as for $p\to \infty$ the parameter $\alpha$ explodes. Nonetheless, one can construct counterexamples even when choosing such a norm as penalization term, as highlighted in the following example.

\begin{example} Consider the graph $\Gamma$ of Figure \ref{fig:infty-refining} and fix\footnote{As observed in footnote~\ref{l=1/2} on page~\pageref{l=1/2}, it is easy to modify weights and get a counterexample for any $\lambda\in(0,1)$.} $\lambda=1/2$. Being $\Gamma$ a path, symmetric graph it is immediate to see that the $2$-partition $D_2$ with zero deviation is obtained by removing the middle edge, whose weight is $4$; thus, $\mathcal{F}_{\frac12,\infty}(D_2)=\frac 12 \cdot 4$. Since any other $2$-partition has energy at least $\frac52$, $D_2$ is the optimal one.\par

\begin{figure}[t]
\begin{center}
\begin {tikzpicture}[auto ,node distance =2.5 cm and 2cm ,on grid ,
semithick ,
state/.style ={ circle ,top color =white , bottom color = gray!00 ,
draw,gray , text=black , minimum width =.5 cm}]
\node[state] (1) {$2$};
\node[state] (2) [right =of 1] {$2$};
\node[state] (3) [above =of 2] {$4$};
\node[state] (4) [right =of 3] {$4$};
\node[state] (5) [below =of 4] {$2$};
\node[state] (6) [right =of 5] {$2$};
\node[state] (9) [left =of 1] {$2$};
\node[state] (10) [above =of 9] {$2$};
\node[state] (8) [right =of 6] {$2$};
\node[state] (7) [above =of 8] {$2$};
\path (1) edge [bend right = 0] node[below =0.15 cm] {$1$} (2);
\path (2) edge [bend right = 0] node[left =0.15 cm] {$1$} (3);
\path (3) edge [bend right = 0] node[below =0.15 cm] {$4$} (4);
\path (4) edge [bend right = 0] node[right =0.15 cm] {$1$} (5);
\path (5) edge [bend right = 0] node[below =0.15 cm] {$1$} (6);
\path (6) edge [bend right = 0] node[below =0.15 cm] {$1$} (8);
\path (8) edge [bend right = 0] node[left =0.15 cm] {$1$} (7);
\path (9) edge [bend right = 0] node[right =0.15 cm] {$1$} (10);
\path (9) edge [bend right = 0] node[below =0.15 cm] {$1$} (1);
\end{tikzpicture}
\caption{A graph where the optimal $4$-partition is not a $2$-refining of the optimal $2$-partition.} \label{fig:infty-refining}
\end{center}
\end{figure}
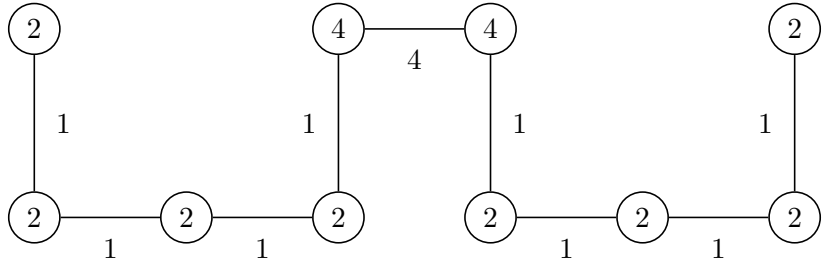

On the one hand, the best $2$-refining of $D_2$ is the $4$-partition $D_4$ with zero deviation, obtained by removing two additional edges of weight $1$ (the third starting from the left and its symmetric); its energy is $\mathcal{F}_{\frac12,\infty}(D_4)=\frac 12 \cdot 6$. On the other hand, consider the (non-symmetric) partition $C_4$ obtained by removing the second, fourth and sixth edge starting from the left; this has deviation energy $2$ (being the masses of the subgraphs $4,~4,~8,~8$) and cut energy $3$, hence $\mathcal{F}_{\frac12,\infty}(C_4)=\frac52$. Thus, the optimal $4$-partition is not induced by the optimal $2$-partition.
\end{example}

\section{Conclusions and future research}\label{sec:conclusions}

The model we propose here is very general and it lays the foundations for future work.  It is truly interesting that as a practical consequence of Theorem~\ref{2vertexes} and of Theorem~\ref{thm:isolated} (specifically with the equivalent hypothesis highlighted in Remark~\ref{rem:isolated}) the best course of action for politicians to achieve some particular partition would be to improve some connections.\par

On the other hand, all ``desired'' properties we checked to fail, do so because the two components of the energies do not interact. This is because the weights $f$ and $g$ can be arbitrarily chosen. It is possible that by forcing some constitutive relation between the edge and the vertex weights these could be ensured.\par

It would be of great interest studying the behaviour of minimizers when the number of vertexes grows. In order to do so, it would be desirable to adopt a $\Gamma$-convergence approach towards a limit continuous model, which is a fundamental tool in the asymptotic behaviour analysis. This approach would provide a bridge between the discrete-continuous models; there is the need of some definition of convergence for sequences of graphs with an increasing number of vertexes. In this direction, a very recent technique has been developed and exploits the so-called \emph{graphons}. These objects are functions on $[0,1]^2$ which somehow represent the adjacency matrices of the graphs (see~\cite{Lov12, LS06}) and their convergence can be studied through a suitable norm, called \emph{cut norm}. Very recently $\Gamma$-convergence of the cut energy has been studied in these terms, see~\cite{BCD19}. We plan to build on these latest results, by expanding the study of the convergence to our more general functional~\ref{eq:L-energy}.

\subsection{Open problems}

There are some open problems we are interested in studying in the future. Among those
\begin{itemize}
\item generalizing the result of Theorem~\ref{2vertexes} trying to force multiple distinct pairs together;
\item generalizing\footnote{Since the first submission of the present paper, this generalization was achieved as part of a forthcoming work jointly with Bertolotti.\label{fn:1}} the result of Theorem~\ref{2vertexes} trying to force $n$ vertexes together;
\item generalizing$^{\ref{fn:1}}$ the result of Theorem~\ref{thm:isolated} to $N$-partitions;
\item trying to either find a modification of the model for which the properties~(i),~(ii) and especially~(v) hold or prove a general impossibility result in the spirit of Arrow~\cite{Arr} and Balinski-Young~\cite{BalYou}.
\item studying the $\Gamma$-convergence of~\ref{eq:L-energy} as $N$ goes to infinity.
\end{itemize}
Clearly there are some necessary hypotheses to be made. Moreover we would expect that forcing multiple vertexes together would require a stringent structure of the subgraph they form, e.g. to be a cycle or wheel graph. It is very possible that these properties do not hold for the fully general model, but we do not have any counterexample to exhibit at the current stage.\par

Finally, we plan to impose some constraints on the choices of $f$ and $g$ by pairing them via some suitable equation. This would imply a more rigid structure to the energy possibly leading to stronger theorems. For instance a possible choice would be to ask
\[
f(v) := \sum_{\partial \Gamma_v} g(e)\,,
\]
being $\Gamma_v$ the subgraph consisting of the lone vertex $v$. This choice would mean, in the politics' application, that each district ensures a flow-in/out of people equal to the number of its citizens. Once such a choice is made or other choices of $g=g(f)$, we are set on doing some numerical simulations reflecting real-world situations occurring in EU states or at the EU level.

%For acknowledgements section, please don't number the section, please begin it with \section*{Acknowledgements}
\section*{Acknowledgments}
The authors would like to thank the referee for the valuable comments and insights.

\bibliographystyle{plainurl}

\bibliography{Electoral_Districts_bib.bib}

\begin{thebibliography}{10}

\bibitem{ABLRS09}
N.~Apollonio, R.~I. Becker, I.~Lari, F.~Ricca, and B.~Simeone.
\newblock Bicolored graph partitioning, or: gerrymandering at its worst.
\newblock {\em Discrete Appl. Math.}, 157:3601--3614, 2009.
\newblock \href {http://dx.doi.org/10.1016/j.dam.2009.06.016}
  {\path{doi:10.1016/j.dam.2009.06.016}}.

\bibitem{Arr}
K.J. Arrow.
\newblock {\em Social {C}hoice and {I}ndividual {V}alues}.
\newblock Cowles Commission Monograph No. 12. John Wiley \& Sons, Inc., New
  York, 1951.
\newblock \href {https://www.jstor.org/stable/j.ctt1nqb90}
  {\path{jstor:j.ctt1nqb90}}.

\bibitem{BalYou}
M.L. Balinski and H.P. Young.
\newblock {\em Fair {R}epresentation: {M}eeting the {I}deal of {O}ne {M}an,
  {O}ne {V}ote}.
\newblock Yale University Press, New Haven, 1982.
\newblock \href {https://www.jstor.org/stable/10.7864/j.ctvcb59f6}
  {\path{jstor:10.7864/j.ctvcb59f6}}.

\bibitem{BVGHSKLMR17}
S.~Bangia, C.~Vaughn~Graves, G.~Herschlag, H.~Sung~Kang, J.~Luo, J.~Mattingly,
  and R.~Ravier.
\newblock Redistricting: drawing the line.
\newblock \href {http://arxiv.org/abs/1704:03360v2}
  {\path{arXiv:1704:03360v2}}.

\bibitem{BS18}
R.~Barnes and J.~Solomon.
\newblock Gerrymandering and compactness: implementation flexibility and abuse.
\newblock \href {http://arxiv.org/abs/1803.02857v1}
  {\path{arXiv:1803.02857v1}}.

\bibitem{BoMu76}
J.A. Bondy and U.S.R. Murty.
\newblock {\em {Graph Theory with Applications}}.
\newblock The Macmillan Press Ltd., 1976.

\bibitem{BCD19}
A.~Braides, P.~Cermelli, and S.~Dovetta.
\newblock {$\Gamma$}-limit of the cut functional on dense graph sequences.
\newblock {\em ESAIM Control Optim. Calc. Var.}, 2019.
\newblock \href {http://dx.doi.org/10.1051/cocv/2019029}
  {\path{doi:10.1051/cocv/2019029}}.

\bibitem{BMSSS16}
A.~Bulu\c{c}, H.~Meyerhenke, I.~Safro, P.~Sanders, and C.~Schulz.
\newblock {\em Algorithm {E}ngineering}, volume 9220, chapter Recent {A}dvances
  in {G}raph {P}artitioning, pages 117--158.
\newblock Springer, Cham, 2016.
\newblock \href {http://dx.doi.org/10.1007/978-3-319-49487-6_4}
  {\path{doi:10.1007/978-3-319-49487-6_4}}.

\bibitem{Cer06}
R.~Cerf.
\newblock {\em {The Wulff Crystal in Ising and Percolation Models}}, volume
  1878.
\newblock Springer-Verlag Berlin Heidelberg, 2006.
\newblock \href {http://dx.doi.org/10.1007/b128410}
  {\path{doi:10.1007/b128410}}.

\bibitem{FLSS18}
D.~De~Ford, H.~Lavenant, Z.~Schutzman, and J.~Solomon.
\newblock Total variation isoperimetric profiles.
\newblock \href {http://arxiv.org/abs/1809.07943} {\path{arXiv:1809.07943}}.

\bibitem{DeG54}
E.~De~Giorgi.
\newblock Su una teoria generale della misura $(r-1)$-dimensionale in uno
  spazio ad $r$ dimensioni.
\newblock {\em Ann. Mat. Pura Appl.}, 36(1):191--213, 1954.
\newblock \href {http://dx.doi.org/10.1007/BF02412838}
  {\path{doi:10.1007/BF02412838}}.

\bibitem{DucTen18}
M.~Duchin and B.E. Tenner.
\newblock Discrete geometry for electoral geography.
\newblock \href {http://arxiv.org/abs/1808.05860} {\path{arXiv:1808.05860}}.

\bibitem{GH94}
O.~Goldschmidt and D.S. Hochbaum.
\newblock A polynomial algorithm for the $k$-cut problem for fixed $k$.
\newblock {\em Math. Oper. Res.}, 19(1):24--37, 1994.
\newblock \href {http://dx.doi.org/10.1287/moor.19.1.24}
  {\path{doi:10.1287/moor.19.1.24}}.

\bibitem{GMPRSbook}
P.~Grilli~di Cortona, C.~Manzi, A.~Pennisi, F.~Ricca, and B.~Simeone.
\newblock {\em Evaluation and optimization of electoral systems}.
\newblock SIAM Monographs on discrete mathematics and applications, 1999.
\newblock \href {http://dx.doi.org/10.1137/1.9780898719819}
  {\path{doi:10.1137/1.9780898719819}}.

\bibitem{HKLVgBRM18}
G.~Herschlag, H.~S. Kang, J.~Luo, C.~Vaughn~Graves, S.~Bangia, R.~Ravier, and
  J.~Mattingly.
\newblock Quantifying gerrymandering in {N}orth {C}arolina, 2018.
\newblock \href {http://arxiv.org/abs/1801.03783} {\path{arXiv:1801.03783}}.

\bibitem{HRM17}
G.~Herschlag, R.~Ravier, and J.~Mattingly.
\newblock Evaluating partisan gerrymandering in {W}isconsin, 2017.
\newblock \href {http://arxiv.org/abs/1709.015096} {\path{arXiv:1709.015096}}.

\bibitem{LS18b}
G.~P. Leonardi and G.~Saracco.
\newblock Two examples of minimal {C}heeger sets in the plane.
\newblock {\em Ann. Mat. Pura Appl. (4)}, 197(5):1511--1531, 2018.
\newblock \href {http://dx.doi.org/10.1007/s10231-018-0735-y}
  {\path{doi:10.1007/s10231-018-0735-y}}.

\bibitem{LeoSar18}
G.P. Leonardi and G.~Saracco.
\newblock The prescribed mean curvature equation in weakly regular domains.
\newblock {\em Nonlinear Differ. Equ. Appl.}, 25(2):9, 2018.
\newblock \href {http://dx.doi.org/10.1007/s00030-018-0500-3}
  {\path{doi:10.1007/s00030-018-0500-3}}.

\bibitem{Lov12}
L.~Lov\'asz.
\newblock {\em Large networks and graph limits}, volume~60.
\newblock American Mathematical Society, Providence, RI, 2012.
\newblock \href {http://dx.doi.org/10.1090/coll/060}
  {\path{doi:10.1090/coll/060}}.

\bibitem{LS06}
L.~Lov\'asz and B.~Szegedy.
\newblock Limits of dense graph sequences.
\newblock {\em J. Combin. Theory Ser. B}, 96(6):933--957, 2006.
\newblock \href {http://dx.doi.org/10.1016/j.jctb.2006.05.002}
  {\path{doi:10.1016/j.jctb.2006.05.002}}.

\bibitem{MVG14}
J.~Mattingly and C.~Vaughn~Graves.
\newblock Redistricting and the will of the people.
\newblock \href {http://arxiv.org/abs/1410:8796v1} {\path{arXiv:1410:8796v1}}.

\bibitem{PraSar19}
A.~Pratelli and G.~Saracco.
\newblock The $\varepsilon$-$\varepsilon^\beta$ property for a double density,
  and the regularity of double-density isoperimetric sets.
\newblock \href {http://cvgmt.sns.it/paper/4495/} {\path{cvgmt:4495}}.

\bibitem{PraSar18}
A.~Pratelli and G.~Saracco.
\newblock On the isoperimetric problem with double density.
\newblock {\em Nonlinear Anal.}, 177(B):733--752, 2018.
\newblock \href {http://dx.doi.org/10.1016/j.na.2018.04.009}
  {\path{doi:10.1016/j.na.2018.04.009}}.

\bibitem{RSS13}
F.~Ricca, A.~Scozzari, and B.~Simeone.
\newblock Political districting: from classical models to recent approaches.
\newblock {\em Annals OR}, 204(1):271--299, 2013.
\newblock \href {http://dx.doi.org/10.1007/s10479-012-1267-2}
  {\path{doi:10.1007/s10479-012-1267-2}}.

\bibitem{Sar18}
G.~Saracco.
\newblock Weighted {C}heeger sets are domains of isoperimetry.
\newblock {\em Manuscripta Math.}, 156(3--4):371--381, 2018.
\newblock \href {http://dx.doi.org/10.1007/s00229-017-0974-z}
  {\path{doi:10.1007/s00229-017-0974-z}}.

\bibitem{Ste38}
J.~Steiner.
\newblock {Einfacher Beweis der isoperimetrischen Haupts\"atze}.
\newblock {\em J. Reine Angew. Math.}, 18:281--296, 1838.
\newblock \href {http://dx.doi.org/10.1515/crll.1838.18.281}
  {\path{doi:10.1515/crll.1838.18.281}}.

\bibitem{Wil96}
R.~J. Wilson.
\newblock {\em Introduction to {G}raph {T}heory}.
\newblock Longman Group Ltd., {F}ourth {E}dition edition, 1996.

\end{thebibliography}

%%%%%%%%%%
%

\end{document}